\providecommand{\tabularnewline}{\\}
\theoremstyle{plain}
\newtheorem{thm}{\protect\theoremname}
\theoremstyle{plain}
\newtheorem{prop}[thm]{\protect\propositionname}
\theoremstyle{remark}
\newtheorem{rem}[thm]{\protect\remarkname}
\theoremstyle{plain}
\newtheorem{lem}[thm]{\protect\lemmaname}
\providecommand{\lemmaname}{Lemma}
\providecommand{\propositionname}{Proposition}
\providecommand{\remarkname}{Remark}
\providecommand{\theoremname}{Theorem}
\begin{document}
\title{Non-overshooting continuous in convergence sliding mode control of second-order systems}
\author{Michael Ruderman and Denis Efimov\thanks{M. Ruderman is with University of Agder. Grimstad, Norway. Email:
\texttt{\small{}michael.ruderman@uia.no}} \thanks{D. Efimov is with Inria, Univ. Lille, CNRS, CRIStAL. Lille, France.}
\thanks{This work was supported by RCN grant number 340782.}}

\maketitle
\thispagestyle{empty}
\begin{abstract}
This paper proposes a novel nonlinear sliding
mode state feedback controller for perturbed second-order systems.
In analogy to a linear proportional-derivative (PD) feedback
control, the proposed nonlinear scheme uses the output of interest
and its time derivative. The control has only one free design
parameter, and the closed-loop system is shown to possess uniform
boundedness and finite-time convergence of trajectories in the
presence of matched disturbances. We derive a strict Lyapunov
function for the closed-loop control system with a bounded
exogenous perturbation, and use it for both, the control parameter
tuning and analysis of the finite-time convergence. The essential
features of the proposed new control law is non-overshooting
despite the unknown dynamic disturbances and the continuous
control action during the convergence to zero equilibrium. Apart
from the numerical results, a revealing experimental example is
also shown in favor of the proposed control and in comparison with
PD and sub-optimal nonlinear damping regulators.
\end{abstract}


\section{Introduction}

\label{sec:1}

Convergence properties of a transient response and the ability to
compensate for unknown (external) disturbances are often the
trade-offs when designing a feedback control at large. Well known,
a PD regulator, which is equivalent to the state-feedback control
for second-order systems, is often sufficient for the unperturbed
case provided that an exponential convergence is acceptable. When
the system behavior is subject also to static errors, an integral
control action can be added, thus resulting in a standard PID (or
integral-plus-state feedback) scheme in case of the linear
feedback regulation \cite{aastrom2006}. From another side, an
integral feedback action increases inherently the system order and
can also lead to the wind-up effects and even destabilization
\cite{hippe2006}. If disturbances are not static and possess
certain regularity only (e.g., Lipschitz continuity or/and
boundedness), then the robust feedback regulators like, for
instance, high-gain (see, e.g., \cite{ilchmann2008} and references
therein) or finite-time (see, e.g., \cite{bhat2000} and a recent
text \cite{efimov2021}) controllers can be required. Beyond that,
the input-to-state stability (ISS) theory \cite{sontag2008} is
usually applied for studying the stability and robustness in
presence of external disturbances, once the overall feedback
system is nonlinear \cite{khalil2002}.

Second-order systems capture a relatively large class of the
(perturbed) dynamic processes, especially in mechanical,
electromagnetic, and fluid-flow domains. In that way, a perturbed
double-integrator represents an often encountered system process.
A comparative overview of different feedback control principles
applied to a classical double-integrator can be found in
\cite{rao2001}. Also the sliding-mode control methods, see
\cite{shtessel2014,utkin2020}, become relatively popular for
compensating the unknown matched perturbations. Indeed, a feedback
control action proportional to the sign of the control error has
already been recognized in earlier works as particularly efficient
for a time-optimal stabilization \cite{fuller1960,haimo1986}, and
disturbance compensation \cite{Tsypkin1984relay}. However, a
discontinuous control signal, especially a continuously switching
one, can be undesirable in some scenarios, especially with regard
to the wear effects, overloading of the actuators, noise, and
energy consumption. To this end, in \cite{levant2005} the class of
quasi-continuous high order sliding mode control algorithms has
been introduced, which have the discontinuity at the origin only.
Therefore, a feedback control law that would allow both, a sign
driven robustness to the unknown but bounded perturbations and (in
addition) a continuous damping action which would minimize the
overshooting and steady-state oscillations, can be desirable. The
idea of an inverse to the output's norm damping was originally
proposed for second-order systems in the so-called sub-optimal
nonlinear damping control \cite{ruderman2021a}. Recently, the
sub-optimal nonlinear damping control was augmented by a linear
integral term in \cite{efimovruderman2024}, also with additional
analysis so as to cope with the matched constant disturbances.
Moreover, the ISS property was established for time-varying
bounded perturbations in that augmented control system
\cite{efimovruderman2024}.

The present work builds up on the idea proposed in
\cite{ruderman2021a} and uses the sign-feedback principle of
sliding mode control theory following \cite{levant2005}, while
introducing a novel nonlinear state feedback control for the
perturbed second-order systems. The unknown time-varying
perturbations are assumed to be upper bounded, and the controller
has only one free design parameter, similarly to
\cite{ruderman2021a}. The introduced nonlinear feedback controller
has the same structure as its linear counterpart (i.e., the PD
one) and feeds back the output of interest and its time
derivative. Moreover, it offers finite-time convergence despite
unknown bounded perturbations and is continuous during convergence
to the globally stable equilibrium at the origin, where the
sliding mode is originated (a signature behavior for quasi-
continuous sliding mode control laws). Particularly remarkable in
comparison to several robust second-order sliding mode controllers
is that the proposed control offers convergence without
overshooting the origin (or reference value) that comes in favor
of numerous applications. In addition, even if the disturbance
exceeds the admissible bounds in the steady-state equilibrium at
the origin, the designed controller preserves its invariance.

The paper is organized as follows. The proposed nonlinear state
feedback control is introduced in Section \ref{sec:2}, also
demonstrating qualitatively the phase portrait and shape of the
converging state trajectories and control value, and that in
comparison with a linear state feedback control (equivalent to PD)
and sub-optimal nonlinear damping control \cite{ruderman2021a}.
The main part of the closed-loop system analysis is delivered in
Section \ref{sec:3}. First, the solutions of the system are shown
to be uniformly (in disturbances) bounded and asymptotically
converging, then the finite-time convergence is formally proved,
and boundedness of the control is established. Section \ref{sec:4}
reports the experimental evaluation of the proposed control and
compares it with an equivalent PD and sub-optimal nonlinear
damping controllers. The results reported in this communication
are briefly summarized in Section \ref{sec:5}.

\section{Novel nonlinear feedback control}
\label{sec:2}

In this section, first, we describe the studied control problem;
second, the proposed control algorithm is presented; third, we
finish with an exemplary illustration of the dynamic behavior of
the closed-loop system.

\subsection{Problem statement}

We consider a class of perturbed second-order systems:
\begin{equation}
\frac{\mathrm{d}^{2}y(t)}{\mathrm{d}t^{2}}=u(t)+d(t),\quad
t\geq0,\label{eq:system}
\end{equation}
with the control signal $u(t)\in\mathbb{R}$. The matched unknown
disturbance quantity $d(t)\in\mathbb{R}$ is assumed to be Lebesgue
measurable and essentially bounded, where
$\|d\|_{\infty}=\text{ess}\sup_{t\geq0}|d(t)|\leq D$ with a known
upper bound $D>0$. Both dynamic system states,
$x(t)=\left(x_{1}(t),x_{2}(t)\right)^{\top}\equiv\bigl(y(t),\dot{y}(t)\bigr)^{\top}\in\mathbb{R}^{2}$
(including output derivative\footnote{We will use the doted
symbol, i.e., $\dot{a}\equiv\mathrm{d}a/\mathrm{d}t$, to denote
the time derivative of a variable $a$, equally as for the
first-order differential equations.}), are available for a
feedback design.

Our primary goal is a variant of robust finite-time stabilization
of \eqref{eq:system} at $x=0$: it is required to design a state
feedback $u(t)=u\bigl(x_{1}(t),x_{2}(t)\bigr)$ guaranteeing that
there exist $\sigma,T\in\mathcal{K}$ such that
$|x(t)|\leq\sigma(|x(0)|)$ for all $t\geq0$ and $|x(t)|=0$ for all
$t\geq T(|x(0)|)$ for any
$x(0)\in\mathbb{X}=\{x\in\mathbb{R}^{2}\,:\,x_{1}\neq0\}\cup\{0\}$
and all $\|d\|_{\infty}\leq D$. Thus, we assume that at the
initial time instant, the $x_{1}(0)$ output state is not zero, and
it is required to bring $x(t)$ to zero in a finite time
independently of the presence of a properly bounded disturbance
$d$.

Note that in case of an additional (inertial) scaling constant in
the left hand-side of \eqref{eq:system}, like in the experimental
study of a moving lumped mass shown in Section \ref{sec:4}, this
should be additionally incorporated as a normalization factor of
the control terms, cf. \cite{ruderman2022}. A more general setting
with an uncertain control coefficient, which is separated from
zero with known bounds, can be treated by properly augmenting the
control gain as it is usually done in the sliding mode theory.
This case is skipped for brevity.

\subsection{Control law}

The proposed control for $x(t)\in\mathbb{X}$ has the form:
\begin{equation}
u(t)=\begin{cases}
-|x_{1}(t)|^{-1}\bigl(\gamma x_{1}(t)+|x_{2}(t)|x_{2}(t)\bigr) & x_{1}(t)\ne0\\
-\gamma\,\text{sign}\bigl(x_{1}(t)\bigr) & x(t)=0
\end{cases},\label{eq:ctrl}
\end{equation}
where $\gamma>D$ is the only tuning parameter, and it yields for
$x(t)\in\mathbb{X}$ the closed-loop system dynamics to
\begin{gather}
\frac{\mathrm{d}x_{1}(t)}{\mathrm{d}t}=x_{2}(t),\label{eq:CL_syst}\\
\frac{\mathrm{d}x_{2}(t)}{\mathrm{d}t}=\qquad\;\;\nonumber \\
\begin{cases}
-|x_{1}(t)|^{-1}\bigl(\gamma x_{1}(t)+|x_{2}(t)|x_{2}(t)\bigr)+d(t) & x_{1}(t)\ne0\\
-\gamma|x_{1}(t)|^{-1}x_{1}(t)+d(t) & x_{1}(t)=0
\end{cases}.\nonumber
\end{gather}
As we can conclude, the proposed control is discontinuous at
$x_{1}=0$, and in the set $\mathbb{X}$ the irregularity happens at
the origin only, which is the equilibrium of the system. Indeed,
there are two discontinuous terms. The first one is the
conventional sign function
$x_{1}(t)|x_{1}(t)|^{-1}=\text{sign}(x_{1}(t))$ widely used in the
sliding mode control theory, where discontinuity appears for
$x_{1}(t)=0$ only. However, in the set $\mathbb{X}$, having
$x_{1}(t)=0$ is equivalent to $x(t)=0$. Consequently, this term
introduces discontinuity just at the equilibrium, and for
definition of solutions in the face of this discontinuity, the
Filippov's theory can be used with convex embedding
\cite{shtessel2014,utkin2020}. The second discontinuity may come
from the fraction $|x_{2}(t)|x_{2}(t)|x_{1}(t)|^{-1}$, but this
term is active for $x_{1}(t)\ne0$ only, and it stays always
bounded on the trajectories of \eqref{eq:CL_syst}. The
investigation of the influence of this term requires a special
attention at $x_{1}=0$, so that a proper analysis approach will be
utilized in the following. Below, in order to substantiate
existence and uniqueness of solutions for \eqref{eq:CL_syst}, we
will demonstrate similarity of the system \eqref{eq:CL_syst} via a
time change to another system with well defined solutions, as we
have already done for the sub-optimal nonlinear damping control in
\cite{efimovruderman2024}. Next, the stability and performance of
the closed-loop system can be analyzed. Before doing it, we
proceed with some illustrative simulation results showing the
intuition behind the proposed design and qualitative properties of
the state trajectories of the controlled system.

\subsection{Exemplary}

An illustration of phase-portrait of the closed-loop control
system \eqref{eq:CL_syst} is given in Figure \ref{fig:2:0}. Here
the control gain factor $\gamma=100$ is arbitrary assigned. Due to
a symmetric behavior with respect to the origin, the trajectories
for different initial conditions are shown in the 1st and 4th
quadrants only.
\begin{figure}[!h]
\centering
\includegraphics[width=0.98\columnwidth]{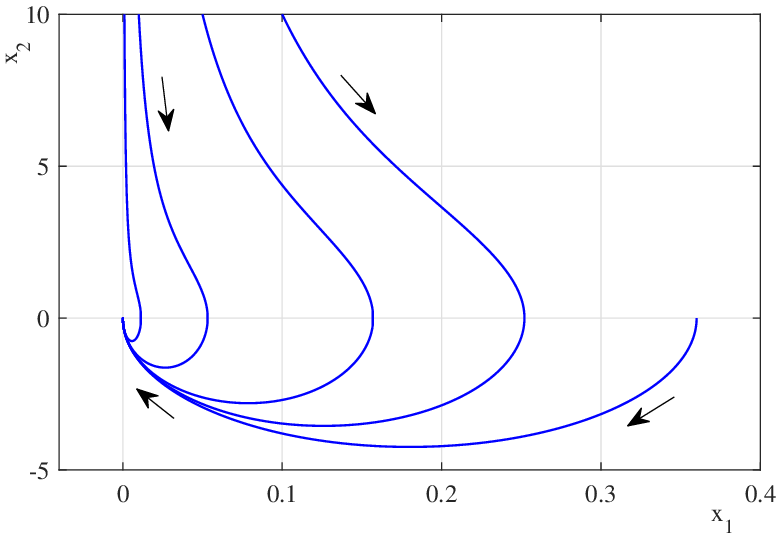}
\caption{\label{fig:2:0} Phase portrait exemplary of the
closed-loop system \eqref{eq:CL_syst}.}
\end{figure}

The trajectories of $x_{2}(t)$ and the control values $u(t)$ are
also qualitatively compared in Figure \ref{fig:2:1} between the
proposed control \eqref{eq:CL_syst}, sub-optimal nonlinear damping
control \cite{ruderman2021a}, and critically damped linear state
feedback control (equivalent to PD one).
\begin{figure}[!h]
\centering
\includegraphics[width=0.98\columnwidth]{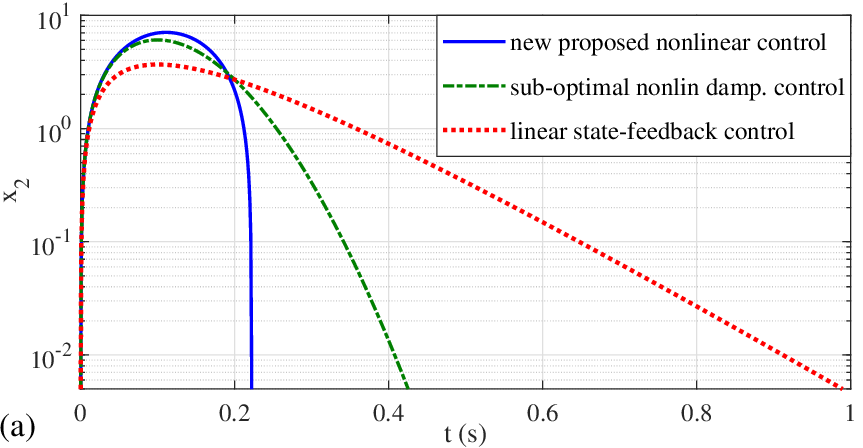}
\includegraphics[width=0.98\columnwidth]{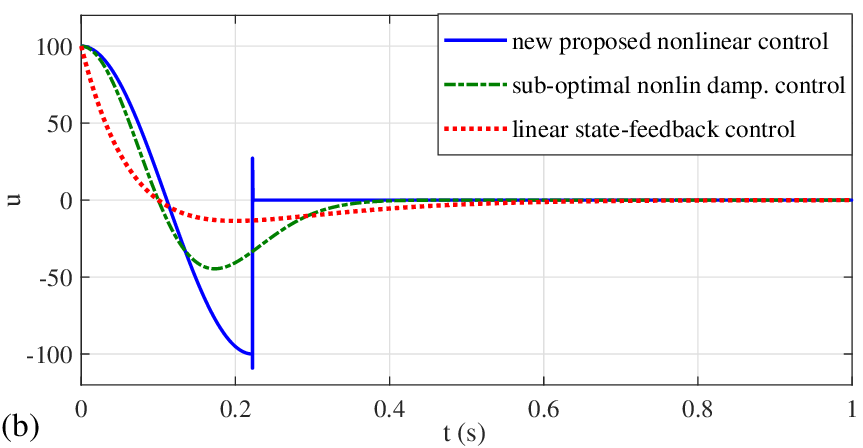}
\caption{\label{fig:2:1} Response of the dynamic state $x_{2}(t)$
in (a) and control signal $u$ in (b) of the proposed new nonlinear
control \eqref{eq:CL_syst}, sub-optimal nonlinear damping control
\cite{ruderman2021a}, and critically damped linear state feedback
control.}
\end{figure}
All three controllers are assigned with the same output feedback
gain factor $\gamma=100$, while the response of $x_{2}(t)$ is
depicted on the logarithmic scale for the sake of a better
comparison, see Figure \ref{fig:2:1} (a). A finite time
convergence of the control \eqref{eq:CL_syst} can be qualitatively
recognized. The corresponding control values $u(t)$ are shown in
Figure \ref{fig:2:1} (b). All three control laws disclose the same
value at the beginning, this is due to the equal gaining factor
$\gamma$ and the same initial conditions $x_{1}(0)=-1$,
$x_{2}(0)=0$. Then, one can recognize that the control
\eqref{eq:CL_syst} uses the range of admissible values more
efficiently during the convergence phase.

\section{Analysis}

\label{sec:3}

We start this section with the proof of existence and uniqueness
of solutions in \eqref{eq:CL_syst}. Next, the uniform boundedness
of solutions and asymptotic convergence to the origin will be
substantiated (uniformity is understood as independence of these
properties on the properly bounded inputs). The rate of
convergence is evaluated at the end of the section, together with
the upper bound on the control magnitude.

\subsection{Existence of solutions}

\label{sec:3:sub:1}

In the Appendix, for the system \eqref{eq:aux} operating in time
$\tau\geq0$ the following properties are demonstrated for any
$x(0)\in\mathbb{R}^{2}$ and the inputs with $\|d\|_{\infty}\leq
D$:
\begin{enumerate}
\item global existence and uniqueness of solutions $x(\tau)=x(\tau,x(0),d)$
for all $\tau\geq0$,
\item invariance of the sets $\mathcal{X}=\{x\in\mathbb{R}^{2}:x_{1}=0\}$
and
$\mathbb{X}=\left(\mathbb{R}^{2}\setminus\mathcal{X}\right)\cup\{0\}$,
\item existence of the equilibrium at the origin, which is uniformly globally
asymptotically stable.
\end{enumerate}
To use these facts in the analysis of \eqref{eq:CL_syst},
introduce a new (scaled) time argument for \eqref{eq:aux}:
\begin{equation}
t:=\phi(\tau)=\int\limits
_{0}^{\tau}\bigl|x_{1}(s)\bigr|ds,\label{eq:newtime}
\end{equation}
which is well-defined, and $t\in\mathbb{R}_{+}$ for
$\tau\in\mathbb{R}_{+}$ (indeed, $t=0$ for $\tau=0$, and
$\lim_{\tau\to+\infty}\int\limits
_{0}^{\tau}\bigl|x_{1}(s)\bigr|ds\leq+\infty$). Moreover,
$\phi:\mathbb{R}_{+}\to\mathbb{R}_{+}$ is a monotonously growing
function for $x(0)\in\mathbb{X}\setminus\{0\}$ since the origin in
such a case is reached for $\tau\to+\infty$ only, and it has an
inverse $\tau=\phi^{-1}(t)$. However, the domain of values of $t$
may be bounded since $\bigl|x_{1}(\tau)\bigr|\to0$ as
$\tau\to+\infty$. That implies
$\mathrm{d}t=\bigl|x_{1}(\tau)\bigr|\mathrm{d}\tau$ while
transforming \eqref{eq:aux} to the closed-loop system
\eqref{eq:CL_syst} for $x(0)\in\mathbb{X}\setminus\{0\}$. Indeed,
with a slight ambiguity define $x(t)=x(\phi^{-1}(t))=x(\tau)$ and
calculate its derivative in the new time $t$ keeping in mind that
$\frac{\mathrm{d}}{\mathrm{d}t}\phi^{-1}(t)=\Bigl|x_{1}\bigl(\phi^{-1}(t)\bigr)\Bigr|^{-1}$,
which gives that $x(t)$ is a solution of \eqref{eq:CL_syst} since
\[
\frac{\mathrm{d}x(t)}{\mathrm{d}t}=\Bigl|\frac{\mathrm{d}x(\tau)}{\mathrm{d}\tau}\Bigr|_{\tau=\phi^{-1}(t)}\frac{\mathrm{d}\phi^{-1}(t)}{\mathrm{d}t}.
\]
Therefore, the systems \eqref{eq:aux} and \eqref{eq:CL_syst} have
the same solutions in $\mathbb{X}\setminus\{0\}$ (and the origin
is the equilibrium for both systems), where the only difference is
that each solution becomes scaled along the independent time
variable based on the norm of $x_{1}$.

Since \eqref{eq:aux} has a unique solution asymptotically
approaching the origin for any initial condition
$x(0)\in\mathbb{X}$ and input with $\|d\|_{\infty}\leq D$, the
system \eqref{eq:CL_syst} also admits this solution while
demonstrating the same behavior in finite or infinite time. In the
former case, since the origin is the equilibrium of
\eqref{eq:CL_syst}, the zero solution can be extended for all
$t\geq0$. Therefore, the following result has been proven:
\begin{prop}
The system \eqref{eq:CL_syst} admits an unique solution for any
$x(0)\in\mathbb{X}$ and $\|d\|_{\infty}\leq D$ well defined with
$x(t)\in\mathbb{X}$ for all $t\geq0$.
\end{prop}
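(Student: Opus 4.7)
The plan is to exploit the state-dependent time change $t=\phi(\tau)=\int_{0}^{\tau}|x_{1}(s)|\,ds$ that has just been introduced, and use it to transfer the properties of the auxiliary system \eqref{eq:aux}, proven in the Appendix, to the closed-loop system \eqref{eq:CL_syst}. The Appendix supplies, for every $x(0)\in\mathbb{R}^{2}$ and every admissible disturbance with $\|d\|_{\infty}\leq D$, a unique global trajectory $x(\tau)$ of \eqref{eq:aux} together with invariance of $\mathbb{X}$ and UGAS of the origin.

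I would first dispose of the trivial initial condition $x(0)=0$. Since $\gamma>D$, the second branch of \eqref{eq:ctrl} shows, in the Filippov sense, that $0$ belongs to the (convexified) right-hand side at the origin, so $x(t)\equiv 0$ is an equilibrium solution that trivially stays in $\mathbb{X}$.

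Next, for $x(0)\in\mathbb{X}\setminus\{0\}$, the invariance of $\mathbb{X}$ under \eqref{eq:aux} gives $x_{1}(\tau)\neq 0$ for all $\tau\geq 0$, while UGAS forces $x(\tau)\to 0$ only as $\tau\to+\infty$. Therefore $\phi$ is absolutely continuous and strictly increasing on $\mathbb{R}_{+}$. Setting $t^{\star}=\lim_{\tau\to+\infty}\phi(\tau)\in(0,+\infty]$, the inverse $\phi^{-1}:[0,t^{\star})\to\mathbb{R}_{+}$ is continuous and strictly increasing. Defining $x(t):=x(\phi^{-1}(t))$ on $[0,t^{\star})$ and using the chain-rule identity $\mathrm{d}\phi^{-1}(t)/\mathrm{d}t=|x_{1}(\phi^{-1}(t))|^{-1}$, already displayed above, one verifies that $x(t)$ solves \eqref{eq:CL_syst} and remains in $\mathbb{X}$ throughout $[0,t^{\star})$. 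Uniqueness on $[0,t^{\star})$ is inherited from \eqref{eq:aux}: any competing solution of \eqref{eq:CL_syst} can be pulled back through $\phi$ to a solution of \eqref{eq:aux} with the same initial data, contradicting the unique-solutions property from the Appendix.

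The main obstacle, and the only step that is not purely notational bookkeeping, is extending the trajectory past $t^{\star}$ when $t^{\star}<+\infty$, which can occur because $|x_{1}(\tau)|\to 0$. Here I would argue continuity at the endpoint, namely $\lim_{t\to t^{\star}}x(t)=\lim_{\tau\to+\infty}x(\tau)=0$ by UGAS, and then append $x(t):=0$ for $t\geq t^{\star}$. Since the origin lies in $\mathbb{X}$ and is a Filippov equilibrium of \eqref{eq:CL_syst} under $\gamma>D$, the concatenation is a well-defined, $\mathbb{X}$-valued solution for all $t\geq 0$, and its uniqueness follows from the uniqueness on $[0,t^{\star})$ together with the invariance of the origin. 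Combining the three cases yields the proposition.
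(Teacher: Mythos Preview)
Your argument is correct and follows essentially the same route as the paper: transport the unique global solutions of the auxiliary system \eqref{eq:aux} through the time change $t=\phi(\tau)$, use invariance of $\mathbb{X}$ and UGAS of the origin to control the behavior on $[0,t^{\star})$, and then extend by the equilibrium solution when $t^{\star}<+\infty$. Your treatment is in fact slightly more explicit than the paper's (you spell out the pull-back argument for uniqueness and the Filippov equilibrium at the origin), but the underlying strategy is identical.
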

In fact, via such a similarity of \eqref{eq:aux} and
\eqref{eq:CL_syst} through the time scaling \eqref{eq:newtime}, a
stronger property is substantiated: boundedness of solutions and
asymptotic convergence to the origin for \eqref{eq:CL_syst} in
$\mathbb{X}$ uniformly in properly bounded $d$. However, below we
would like to apply the Lyapunov function method directly to
\eqref{eq:CL_syst}, so as to establish this property with
demonstration of the finite-time convergence rate.

\subsection{Analysis of stability}

\label{sec:3:sub:2}

We are in the position to prove the uniform boundedness of
solutions and asymptotic convergence to the origin for the
perturbed system \eqref{eq:CL_syst} in $\mathbb{X}$.

\vspace{1mm}

\begin{thm}
\label{thm:GAS} The origin for the system \eqref{eq:CL_syst} is
uniformly attractive for any $x(0)\in\mathbb{X}$ and
$\|d\|_{\infty}\leq D$ if
\begin{equation}
\gamma>D^{1.5}+D+\frac{1}{2},\label{eq:cond}
\end{equation}
and the solutions are bounded for all $t\geq0$.
\end{thm}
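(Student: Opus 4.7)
The natural candidate I would try first is the energy-like function
\[
V(x)=\gamma|x_{1}|+\tfrac{1}{2}x_{2}^{2},
\]
which is continuous, positive definite on $\mathbb{R}^{2}$, and radially unbounded. Differentiating along \eqref{eq:CL_syst} for $x_{1}\neq 0$, the term $\gamma\,\mathrm{sign}(x_{1})x_{2}$ coming from $\dot x_{1}$ cancels the $-\gamma\,\mathrm{sign}(x_{1})$ contribution in $\dot x_{2}$, leaving the clean identity
\[
\dot V = -\,\frac{|x_{2}|^{3}}{|x_{1}|}+x_{2}d(t).
\]
This already provides a very strong dissipation wherever $|x_{2}|$ is not small compared with $\sqrt{D|x_{1}|}$, since then $|x_{2}d|\le D|x_{2}|\le |x_{2}|^{3}/|x_{1}|$ and $\dot V\le 0$. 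The difficulty lies entirely in the \emph{slow region} $\{|x_{2}|^{2}<D|x_{1}|\}$, where the dissipation vanishes faster than the disturbance bound.

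\textbf{Absorbing the perturbation.} To obtain a strict decrease globally, I would apply Young's inequality with exponents $3$ and $3/2$ to the splitting $D|x_{2}|=\bigl(|x_{2}|^{3}/|x_{1}|\bigr)^{1/3}\cdot D|x_{1}|^{1/3}$, which converts part of the cubic dissipation into a residual of order $D^{3/2}|x_{1}|^{1/2}$. Because this residual has a different homogeneity in $x_{1}$ than the remaining dissipation, $V$ cannot be strict by itself; one has to augment it by a cross term that is odd in $x_{1}$, continuous across $x_{1}=0$, and of lower order at the origin (a natural choice is something proportional to $\mathrm{sign}(x_{1})\cdot\min(|x_{1}|^{1/2},|x_{2}|)$ or a $C^{0}$ smoothing thereof), whose time derivative produces an additional strictly negative term of order $-(\gamma-D)$ in the slow region. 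Calibrating the coefficient of this cross term to balance the residuals from the Young step and from the disturbance $d$ through the cross term is precisely what yields the explicit threshold: the $D^{3/2}$ arises from the Young step, the $D$ from the perturbation coupling through the added term, and the $\tfrac{1}{2}$ from the normalization of the cross-term coefficient, producing $\gamma>D^{3/2}+D+\tfrac{1}{2}$.

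\textbf{Closing the argument.} With a strict Lyapunov function $W$ that is continuous on $\mathbb{R}^{2}$ and satisfies $\dot W\le -\alpha(\|x\|)$ for some positive definite $\alpha$ on $\mathbb{X}\setminus\{0\}$, uniform boundedness follows from coercivity of $W$ and monotonicity of $W(x(t))$, while uniform attractivity follows from standard Lyapunov/comparison arguments. The derivative formula is only a priori valid on $\{x_{1}\neq 0\}$, but the set $\{x_{1}=0\}$ is crossed transversally since $\dot x_{1}=x_{2}$ is nonzero at any such crossing off the equilibrium (and by invariance arguments from Section \ref{sec:3:sub:1} the origin is invariant), so $W(x(t))$ is absolutely continuous in $t$ and the pointwise a.e. decay extends to the monotonicity of $t\mapsto W(x(t))$ needed for the conclusion.

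\textbf{Main obstacle.} The hardest step is designing the cross term so that simultaneously: (i) the augmented $W$ remains continuous at $x_{1}=0$ and positive definite; (ii) the extra negative contribution produced upon differentiation dominates the Young residual and the perturbation coupling in the slow region; and (iii) the algebraic book-keeping of constants reproduces exactly the compact bound $\gamma>D^{3/2}+D+\tfrac{1}{2}$. Everything else — the cancellation of $\gamma\,\mathrm{sign}(x_{1})x_{2}$, the transversal crossing of $\{x_{1}=0\}$, and the passage from $\dot W\le 0$ to boundedness and attractivity — is essentially mechanical once the right augmented Lyapunov function is in hand.
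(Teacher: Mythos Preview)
Your strategy is the paper's strategy: start from the energy $V_{0}=\gamma|x_{1}|+\tfrac{1}{2}x_{2}^{2}$, observe $\dot V_{0}=-|x_{2}|^{3}/|x_{1}|+x_{2}d$, split the disturbance term by Young's inequality with exponents $3$ and $3/2$, and then add a cross term to kill the $D^{3/2}|x_{1}|^{1/2}$ residual. Where you stop short is in actually naming the cross term. The paper does not use anything like $\mathrm{sign}(x_{1})\min(|x_{1}|^{1/2},|x_{2}|)$; it uses the much simpler bilinear cross term
\[
\varepsilon\,\sqrt{|x_{1}|}\,\mathrm{sign}(x_{1})\,x_{2}=\varepsilon\,z_{1}x_{2},\qquad z_{1}:=\sqrt{|x_{1}|}\,\mathrm{sign}(x_{1}),
\]
so that the full Lyapunov function is the quadratic form $V=\tfrac{1}{2}(z_{1},x_{2})\bigl[\begin{smallmatrix}2\gamma&\varepsilon\\\varepsilon&1\end{smallmatrix}\bigr](z_{1},x_{2})^{\top}$. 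Differentiating this cross term produces exactly a $-\varepsilon(\gamma-D)\sqrt{|x_{1}|}$ contribution plus a term bounded by $\tfrac{3}{2}\varepsilon\sqrt{|x_{1}|}\,x_{2}^{2}$; two applications of Young (one for $D|x_{1}||x_{2}|$, one for $\sqrt{|x_{1}|}\,x_{2}^{2}$) then yield
\[
\dot V\le -\Bigl(\tfrac{2}{3}-\varepsilon\Bigr)\frac{|x_{2}|^{3}}{|x_{1}|}-\Bigl[\varepsilon\bigl(\gamma-\tfrac{1}{2}-D\bigr)-\tfrac{2}{3}D^{3/2}\Bigr]\sqrt{|x_{1}|},
\]
and the window $\tfrac{2}{3}D^{3/2}/(\gamma-\tfrac{1}{2}-D)<\varepsilon<\tfrac{2}{3}$ is nonempty precisely when $\gamma>D^{3/2}+D+\tfrac{1}{2}$. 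Your $\min$-based candidate would be messier to differentiate and does not obviously reproduce these constants; the product $z_{1}x_{2}$ is the right object.

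One correction to your ``closing the argument'' paragraph: trajectories of \eqref{eq:CL_syst} with $x(0)\in\mathbb{X}$ do \emph{not} cross $\{x_{1}=0\}$ transversally. The forward invariance of $\mathbb{X}$ established in Section~\ref{sec:3:sub:1} (via the time-rescaled system \eqref{eq:aux}) means the line $x_{1}=0$ is reached only at the origin; this is exactly the non-overshooting property highlighted in Remark~3. So the Lyapunov derivative is valid everywhere along the trajectory in $\mathbb{X}\setminus\{0\}$, and no separate crossing argument is needed.
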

\vspace{2mm}

\begin{proof}
As in the Appendix for the system \eqref{eq:aux}, consider for
\eqref{eq:CL_syst} a strict Lyapunov function candidate
\begin{equation}
V(x)=\frac{1}{2}\,\left(\begin{array}{c}
z_{1}\\
x_{2}
\end{array}\right)^{\top}\,\left(\begin{array}{cc}
2\,\gamma & \varepsilon\\
\varepsilon & 1
\end{array}\right)\left(\begin{array}{c}
z_{1}\\
x_{2}
\end{array}\right)\label{eq:lyapunov}
\end{equation}
with $z_{1}=\sqrt{|x_{1}|}\,\mathrm{sign}(x_{1})$, which is
positive definite for $\sqrt{2\gamma}>\varepsilon>0$. This
Lyapunov function is continuously differentiable for
$x\in\mathbb{X}\setminus\{0\}$. Taking the time derivative of $V$
on this set one obtains
\begin{eqnarray*}
\dot{V} & = & x_{2}\,d-x_{2}^{2}\frac{|x_{2}|}{|x_{1}|}-\varepsilon\bigl(\gamma-\mathrm{sign}(x_{1})\,d\bigr)\,\sqrt{|x_{1}|}\\
 &  & +\,\varepsilon\frac{x_{2}^{2}}{\sqrt{|x_{1}|}}\,\Bigl(0.5-\mathrm{sign}(x_{1})\mathrm{sign}(x_{2})\Bigr).
\end{eqnarray*}
Using the \emph{Young's inequality} \cite{young1912} which
postulates
\[
ab\leq\frac{1}{p}\,a^{p}+\frac{p-1}{p}\,b^{p/(p-1)}
\]
for any $a,b\in\mathbb{R}_{+}$ and $p>1$, one can show that
\begin{equation}
\dot{V}\leq-\biggl(\frac{2}{3}-\varepsilon\biggr)\,x_{2}^{2}\frac{|x_{2}|}{|x_{1}|}-\biggl[\varepsilon\Bigl(\gamma-\frac{1}{2}-D\Bigr)-\frac{2}{3}D^{1.5}\biggr]\,\sqrt{|x_{1}|}\label{eq:dV}
\end{equation}
for any $x\in\mathbb{X}$ and $|d|\leq D$. It can be seen, that
taking
\[
\frac{2}{3}\frac{D^{1.5}}{\gamma-\frac{1}{2}-D}<\varepsilon<\frac{2}{3}
\]
both expressions in the brackets on the right hand-side of
$\dot{V}$ are positive under \eqref{eq:cond}. Hence,
$\dot{V}(x,d)<0$ for $x\in\mathbb{X}\setminus\{0\}$ and $|d|\leq
D$ that completes the proof.
\end{proof}
\vspace{2mm}

\begin{rem}
A consequence of the result given in Theorem \ref{thm:GAS}, which
establishes forward invariance of the set $\mathbb{X}$ and global
convergence of all solutions in \eqref{eq:cond} to the origin, is
that it implies the absence of overshooting: the trajectories
cannot cross the line $x_{1}=0$, and they reach it only at the
origin.
\end{rem}
The low bound of the control gain $\gamma$, which is satisfying
the parametric condition \eqref{eq:cond}, is shown as a function
of the maximal disturbance magnitude, i.e., $D$, in Figure
\ref{fig:3:1}.
\begin{figure}[!h]
\centering
\includegraphics[width=0.98\columnwidth]{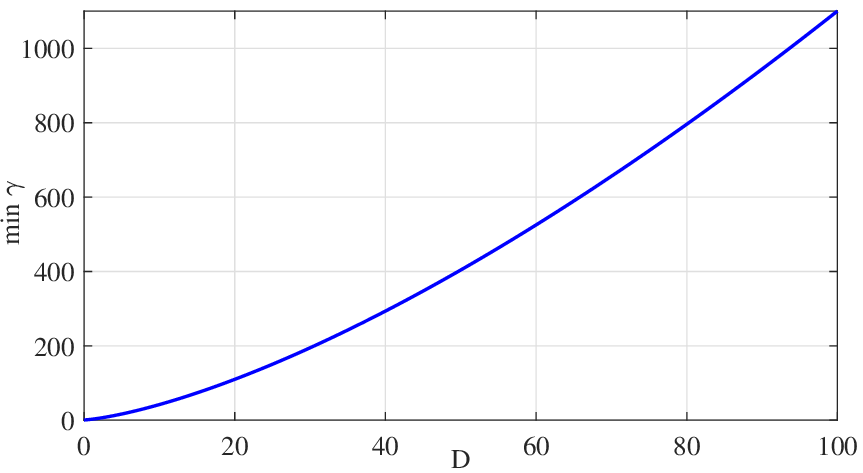}
\caption{\label{fig:3:1} Low bound of control gain depending on
perturbation bound.}
\end{figure}

\subsection{Finite-time convergence and control boundedness}

\label{sec:3:sub:3}

In order to prove the finite-time convergence (see, e.g.,
\cite{bhat2000} for details) of the system \eqref{eq:CL_syst},
which can be supposed from the numerical results given in Figure
\ref{fig:2:1} (a), we make further use of the strict Lyapunov
function $V$ given in \eqref{eq:lyapunov}.

\vspace{2mm}

\begin{thm}
\label{thm:FTC} The system \eqref{eq:CL_syst} is finite-time
convergent to the origin for any $x(0)\in\mathbb{X}$ and
$\|d\|_{\infty}\leq D$ provided that \eqref{eq:cond} is satisfied.
\end{thm}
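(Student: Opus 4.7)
The plan is to turn the estimate \eqref{eq:dV} already derived in the proof of Theorem~\ref{thm:GAS} into a Lyapunov differential inequality of the form $\dot V\leq-\kappa\,V(x)^{1/2}$ along every trajectory in $\mathbb{X}\setminus\{0\}$, which by the standard Bhat--Bernstein criterion \cite{bhat2000} implies finite-time convergence with settling-time bound $T(x(0))\leq 2\kappa^{-1}\sqrt{V(x(0))}$; once $x$ reaches $0$ it remains there by invariance of the equilibrium. The reason to expect precisely the exponent $1/2$ is that the unperturbed closed-loop vector field is weighted-homogeneous of degree $-1$ with weights $(2,1)$ for $(x_1,x_2)$ while $V$ is weighted-homogeneous of degree $2$, so $-\dot V$ is of homogeneous degree $1=2\cdot(1/2)$.

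To establish the inequality I would first record a two-sided bound $c(|x_1|+x_2^2)\leq V(x)\leq C(|x_1|+x_2^2)$ that follows from positive definiteness of the quadratic form in $(z_1,x_2)=(\sqrt{|x_1|}\,\mathrm{sign}(x_1),x_2)$ under $\sqrt{2\gamma}>\varepsilon>0$; in particular $V(x)^{1/2}\leq\sqrt{C}\bigl(\sqrt{|x_1|}+|x_2|\bigr)$. On the other side, \eqref{eq:dV} yields $-\dot V\geq c_1\,|x_2|^3/|x_1|+c_2\sqrt{|x_1|}$ with $c_1:=\tfrac{2}{3}-\varepsilon>0$ and $c_2:=\varepsilon(\gamma-\tfrac{1}{2}-D)-\tfrac{2}{3}D^{1.5}>0$ for the admissible $\varepsilon$ allowed by \eqref{eq:cond}. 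The $\sqrt{|x_1|}$ piece already dominates the first coordinate direction; the hard part—and the main obstacle of the proof—is to dominate $|x_2|$ despite the singular ratio $|x_2|^3/|x_1|$ degenerating near the line $x_1=0$.

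The essential step is the weighted Young inequality with exponents $1/3$ and $2/3$,
\[
\frac{1}{3}\frac{|x_2|^3}{|x_1|}+\frac{2}{3}\sqrt{|x_1|}\;\geq\;\left(\frac{|x_2|^3}{|x_1|}\right)^{\!1/3}\bigl(\sqrt{|x_1|}\bigr)^{2/3}=|x_2|,
\]
where the exponents are chosen precisely so that the $|x_1|$ powers cancel. Allocating a small portion $k\in\bigl(0,\min(3c_1,3c_2/2)\bigr)$ of the right-hand side of \eqref{eq:dV} to this inequality and discarding the non-negative residual $(c_1-k/3)|x_2|^3/|x_1|$, I would obtain
\[
-\dot V\;\geq\;k|x_2|+(c_2-2k/3)\sqrt{|x_1|}\;\geq\;\kappa\bigl(\sqrt{|x_1|}+|x_2|\bigr)
\]
with $\kappa:=\min(k,\,c_2-2k/3)>0$. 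Combined with the upper bound $V(x)^{1/2}\leq\sqrt{C}(\sqrt{|x_1|}+|x_2|)$, this gives the sought $\dot V\leq-(\kappa/\sqrt{C})\,V(x)^{1/2}$; a direct integration $V(x(t))^{1/2}\leq V(x(0))^{1/2}-(\kappa/(2\sqrt{C}))\,t$ yields the explicit finite settling time and, by positive definiteness of $V$, the finite-time convergence of $x(t)$ to the origin uniformly in all admissible disturbances.
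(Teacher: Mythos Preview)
Your argument is correct and in fact more direct than the paper's. The key identity you exploit,
\[
\frac{1}{3}\frac{|x_2|^3}{|x_1|}+\frac{2}{3}\sqrt{|x_1|}\;\geq\;|x_2|,
\]
is exactly the weighted AM--GM inequality with the exponents forced by the homogeneity weights $(2,1)$ you identified, and it lets you pass from \eqref{eq:dV} to $\dot V\le -\kappa\sqrt{V}$ \emph{globally} on $\mathbb{X}\setminus\{0\}$ in one line. The paper instead proceeds \emph{dynamically}: it introduces the auxiliary variables $\zeta=x_2x_1^{-1}$ and $z=x_2^2|x_1|^{-1}$, studies their evolution along trajectories, and shows that after a finite time $t_2$ one has $z(t)\geq\theta>0$; only then does it write $|x_2|^3/|x_1|=z|x_2|\geq\theta|x_2|$ and conclude $\dot V\le -\kappa\sqrt{V}$ for $t\geq t_2$. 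Your route gives an explicit settling-time bound valid from $t=0$ and avoids the separate ODE analysis entirely; the paper's route, while longer, yields as a by-product the forward-invariant cone $\theta\leq z\leq 2(D+\gamma)$, which it then reuses to bound the control amplitude in the subsequent proposition. Both are valid; yours is the cleaner proof of the stated theorem.
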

\begin{proof}
The estimate \eqref{eq:dV} implies boundedness of solutions of
\eqref{eq:CL_syst} in $\mathbb{X}$ for the inputs with magnitude
smaller than $D$, with their asymptotic convergence to the origin.
Let us consider an auxiliary variable
$\zeta(t)=x_{2}(t)x_{1}(t)^{-1}$, which is well defined on the
solutions of the system \eqref{eq:CL_syst} (in the set
$\mathbb{X}\setminus\{0\}$), and it has the dynamics:
\begin{gather*}
\dot{\zeta}=\frac{d\,x_{1}-\bigl(\gamma|x_{1}|+\text{sign}(x_{1})|x_{2}|x_{2}\bigr)-x_{2}^{2}}{x_{1}^{2}}\\
=\frac{d\,\text{sign}(x_{1})-\gamma}{|x_{1}|}-|\zeta|\zeta-\zeta^{2}\\
=\frac{d\,\text{sign}(x_{1})-\gamma}{|x_{1}|}-\begin{cases}
2\zeta^{2} & \zeta>0\\
0 & \zeta\leq0
\end{cases}.
\end{gather*}
Note that $(d\,\text{sign}(x_{1})-\gamma)|x_{1}|^{-1}<0$ due to
\eqref{eq:cond}, which implies that for any $x(0)\in\mathbb{X}$
and $\|d\|_{\infty}\leq D$ there is $t_{1}\geq0$ such that
$\zeta(t)<0$ for all $t>t_{1}$, i.e., the trajectories enter in
the second or the fourth quadrants after finite transients, where
the variable $\zeta$ stays separated with zero. Next, consider
another auxiliary variable $z(t)=x_{2}^{2}(t)|x_{1}(t)|^{-1}$,
which admits the dynamics:
\begin{gather*}
\dot{z}=\frac{|x_{2}|}{|x_{1}|}\,\Bigl(2\text{sign}(x_{2})\bigl(d-\gamma\,\text{sign}(x_{1})\bigr)\\
-\bigl(2+\text{sign}(x_{1})\text{sign}(x_{2})\bigr)z\Bigr).
\end{gather*}
For $t>t_{1}$ and taking into account \eqref{eq:cond},
$\text{sign}(x_{2})(d-\gamma\text{sign}(x_{1}))>0$ out the origin
(the lines $x_{1}=0$ and $x_{2}=0$ can be reached only there).
Hence, for any $x(0)\in\mathbb{X}$ and $\|d\|_{\infty}\leq D$,
there is $t_{2}\geq0$ such that $z(t)\geq\theta>0$ for all $t\geq
t_{2}$ (while the trajectories do not converge to zero), where
$\theta$ depends only on the choice of $\gamma$ and $D$. So, there
are two possibilities, either the trajectories approach the origin
in a finite time, or the variable $z(t)$ becomes strictly positive
and separated with zero, which returning to the Lyapunov function
$V(x)$ from \eqref{eq:lyapunov} and the estimate \eqref{eq:dV}
implies for $t\geq t_{2}$:
\begin{gather*}
\dot{V}\leq-\biggl(\frac{2}{3}-\varepsilon\biggr)\,\theta|x_{2}|-\biggl[\varepsilon\Bigl(\gamma-\frac{1}{2}-D\Bigr)-\frac{2}{3}D^{1.5}\biggr]\,\sqrt{|x_{1}|}\\
\leq-\kappa\sqrt{V}
\end{gather*}
for some $\kappa>0$. The latter differential inequality ensures a
finite-time convergence of $V$ to zero according to
\cite{bhat2000}.
\end{proof}
Slightly extending the arguments used in the last proof we can
substantiate the boundedness of the control \eqref{eq:ctrl}:
\begin{prop}
In the conditions of Theorem \ref{thm:FTC}, the amplitude of
control \eqref{eq:ctrl} admits an upper estimate
\[
|u(t)|\leq\gamma+\max\{x_{2}^{2}(0)|x_{1}(0)|^{-1},2(D+\gamma)\},\;\forall
t\geq0.
\]
\end{prop}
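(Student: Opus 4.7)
The plan is to bound $|u(t)|$ via the auxiliary quantity $z(t) = x_2^2(t)|x_1(t)|^{-1}$ introduced in the proof of Theorem \ref{thm:FTC}. From the definition of the control law \eqref{eq:ctrl}, for $x_1(t)\neq 0$ one has the pointwise estimate $|u(t)| \leq |x_1(t)|^{-1}\bigl(\gamma|x_1(t)| + |x_2(t)|^2\bigr) = \gamma + z(t)$, while at the origin $|u| = \gamma$ directly. So the proposition reduces to showing that $z(t) \leq \max\{z(0),\, 2(D+\gamma)\}$ for all $t\geq 0$ along the solutions of \eqref{eq:CL_syst}.

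For the second step, I would reuse the dynamics of $z$ already computed in the proof of Theorem \ref{thm:FTC}, namely
\[
\dot{z} \;=\; \frac{|x_{2}|}{|x_{1}|}\,\Bigl(2\,\text{sign}(x_{2})\bigl(d-\gamma\,\text{sign}(x_{1})\bigr) - \bigl(2+\text{sign}(x_{1})\text{sign}(x_{2})\bigr)z\Bigr).
\]
The prefactor $|x_2|/|x_1|$ is non-negative, so the sign of $\dot z$ is determined by the bracketed expression. I would observe two elementary facts: (i) the coefficient in front of $z$ satisfies $2+\text{sign}(x_{1})\text{sign}(x_{2}) \geq 1$, and (ii) the forcing term is bounded in absolute value by $2(D+\gamma)$ since $|d|\leq D$. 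Combining them, whenever $z > 2(D+\gamma)$ the bracket is strictly negative, hence $\dot z \leq 0$.

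The third step is a standard comparison/barrier argument: the set $\{z \leq \max(z(0), 2(D+\gamma))\}$ is forward invariant along the trajectories of \eqref{eq:CL_syst} in $\mathbb{X}\setminus\{0\}$, so $z(t) \leq \max\{z(0), 2(D+\gamma)\}$ as long as $x_1(t)\neq 0$. Adding $\gamma$ yields the claimed bound. The trivial cases $x(0)=0$ (where $x(t)\equiv 0$ and $|u|=\gamma$) and the finite terminal time at which the trajectory reaches the origin (at which point we extend by $u\equiv \pm\gamma$ consistently with the control definition) are handled separately.

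The main technical subtlety I anticipate is ensuring that $z$ is well-behaved as $x_1 \to 0$: a priori the ratio $x_2^2/|x_1|$ could blow up near the switching surface. However, from Theorem \ref{thm:FTC} we know the trajectory reaches $x_1=0$ only at the origin and in finite time, and the monotonicity of $z$ above the threshold $2(D+\gamma)$ prevents any excursion to infinity before that. A brief justification that $z$ remains bounded up to the terminal time (e.g., by invoking the invariance of the level sets derived above on the open interval where $x_1 \neq 0$ and passing to the limit) should close this gap cleanly.
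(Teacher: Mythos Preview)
Your proposal is correct and follows essentially the same route as the paper: bound $|u|\leq\gamma+z$ and then show $z(t)\leq\max\{z(0),2(D+\gamma)\}$ from the $\dot z$-dynamics already computed in the proof of Theorem~\ref{thm:FTC}. The only cosmetic difference is that the paper packages the comparison step through the auxiliary Lyapunov function $\mathcal{V}(z)=\tfrac12 z^{2}$, whereas you argue the forward invariance of the sublevel set of $z$ directly; both lead to the same estimate.
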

\begin{proof}
Following the proof of Theorem \ref{thm:FTC} and considering an
auxiliary Lyapunov function $\mathcal{V}(z)=0.5z^{2}$ we get the
estimate on its time derivative for $t\geq0$ while $z(t)\ne0$
(until the origin is not reached, since it is the only invariant
solution on the set $z=0$):
\begin{gather*}
\dot{\mathcal{V}}=\frac{|x_{2}|}{|x_{1}|}\,\Bigl(2\text{sign}(x_{2})\bigl(d-\gamma\,\text{sign}(x_{1})\bigr)z\\
-\bigl(2+\text{sign}(x_{1})\text{sign}(x_{2})\bigr)z^{2}\Bigr)\\
\leq\frac{|x_{2}|}{|x_{1}|}\,\Bigl(2(D+\gamma)|z|-z^{2}\Bigr)\leq\frac{|x_{2}|}{|x_{1}|}\,\Bigl(2(D+\gamma)^{2}-\mathcal{V}\Bigr),
\end{gather*}
which implies the following boundedness estimate in the time
domain:
$\mathcal{V}(z(t))\leq\max\{\mathcal{V}(z(0)),2(D+\gamma)^{2}\}$
(the multiplier $\frac{|x_{2}|}{|x_{1}|}$ may be zero at the
isolated time instant $t_{1}$ when $x_{2}(t_{1})=0$ only), or
\begin{align*}
z(t) & \leq\max\{z(0),2(D+\gamma)\}
\end{align*}
for all $t\geq0$. Note that $|u(t)|\leq\gamma+z(t)$ for all
$t\geq0$ in \eqref{eq:ctrl} that gives the required upper bound
for the control.
\end{proof}
Moreover, since the set $z\leq2(D+\gamma)$ is attractive and
forward invariant (it contains the origin), it implies that for
any trajectory approaching the origin (even initially $x_{1}(0)$
was arbitrary small) the control has an upper bound
\[
|u|\leq2D+3\gamma,
\]
and the term $\frac{|x_{2}|x_{2}}{|x_{1}|}$ does not impose a
singularity in the system \eqref{eq:CL_syst}. In addition, for the
set of initial conditions with $x_{1}(0)\ne0$ and $x_{2}(0)=0$
(i.e., when the control is applied to the system \eqref{eq:system}
in a steady state for $u(t)=d(t)=0$, $t\leq0$), the amplitude of
\eqref{eq:ctrl} is bounded by $2D+3\gamma$ for all $t\geq0$.
\begin{rem}
The proven asymptotic boundedness of $z$ and also its separation
with zero, i.e., the forward attractiveness of the set $\theta\leq
z\leq2(D+\gamma)$ while the trajectory stays in
$\mathbb{X}\setminus\{0\}$, implies that \eqref{eq:CL_syst} admits
an invariant cone in the second and third quadrants.
\end{rem}

\subsection{Numerical simulations}

\begin{figure}[!h]
\centering
\includegraphics[width=0.98\columnwidth]{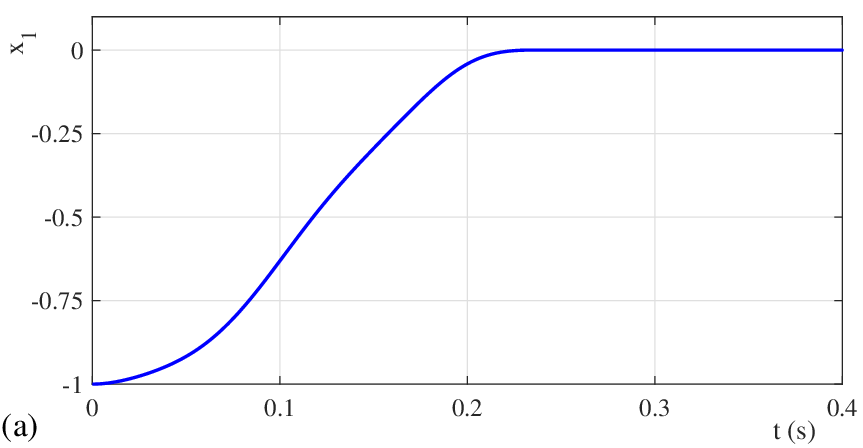}
\includegraphics[width=0.98\columnwidth]{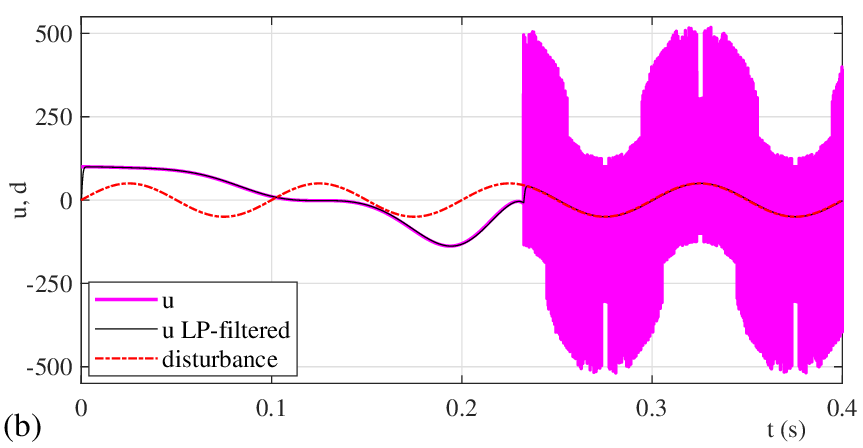}
\includegraphics[width=0.98\columnwidth]{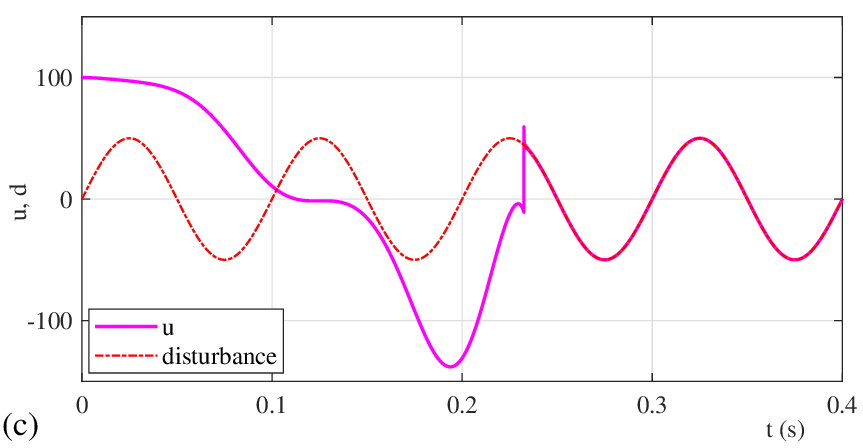}
\caption{Output response of the perturbed control system
\eqref{eq:CL_syst} in (a), and the control signal versus
disturbance value by using: thresholding scheme
\eqref{eq:contFirst} for zero in (b), and regularization scheme
\eqref{eq:contSecond} in (c).} \label{fig:3:2}
\end{figure}
The controlled output behavior of \eqref{eq:CL_syst} is exemplary
visualized in Figure \ref{fig:3:2} (a). Here a periodic
disturbance $d(t)=50\sin(20\pi t)$ is used while the control gain
is assigned as $\gamma=100$. The most simple first-order forward
Euler method (solver) is used. For numerical solvability of the
closed-loop system dynamics \eqref{eq:CL_syst}, two implementation
schemes are used. The first one
\begin{equation}
u(t)=\begin{cases}
-|x_{1}(t)|^{-1}\bigl(\gamma x_{1}(t)+|x_{2}(t)|x_{2}(t)\bigr) & |x_{1}(t)| \geq \mu\\
-\gamma\,\text{sign}\bigl(x_{1}(t)\bigr) & |x_{1}(t)| < \mu
\end{cases},\label{eq:contFirst}
\end{equation}
incorporates a threshold value $\mu
> 0$ for realizing numerical zero. The corresponding control
signal is shown opposite the disturbance value in Figure
\ref{fig:3:2} (b). For the sake of clarity, the same control
signal but low-pass (LP) filtered with 3000 Hz cut-off frequency
is also depicted. The second implementation scheme uses a
regularization factor $\mu > 0$, cf. \cite{ruderman2022}, so that
\begin{equation}
u(t) = -\Bigl( \bigl|x_{1}(t)\bigr| + \mu \Bigr)^{-1}\Bigl(\gamma
x_{1}(t)+\bigl|x_{2}(t)\bigr|x_{2}(t)\Bigr).\label{eq:contSecond}
\end{equation}
The corresponding control signal is shown opposite the disturbance
value in Figure \ref{fig:3:2} (c). Note that for both
implementation schemes, a low numerical value $\mu = 10^{-9}$ is
assumed.

\section{Experimental control results}

\label{sec:4}

The proposed control \eqref{eq:CL_syst} is experimentally
evaluated on the electro-magneto-mechanical actuator system with
one translational degree of freedom, see Figure \ref{fig:4:1}. The
system dynamics can be reduced to the model \eqref{eq:system}.
\begin{figure}[!h]
\centering
\includegraphics[width=0.5\columnwidth]{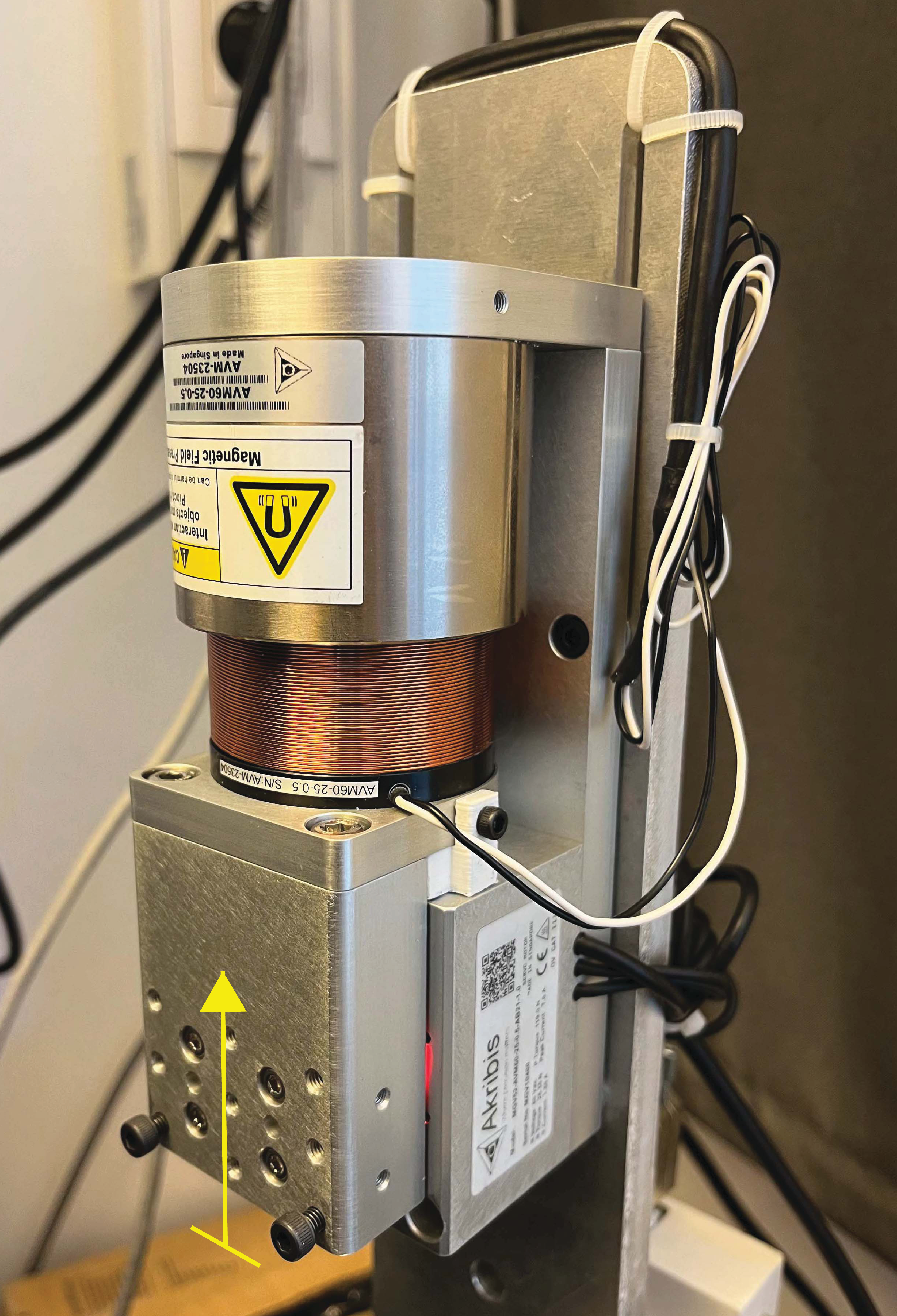}
\caption{\label{fig:4:1} Experimental setup of the actuator
system.}
\end{figure}
Technical details, including the identified system model and most
relevant hardware parameters of the setup, can be found in
\cite{ruderman2022}. The sampling rate is $10$ kHz. We note that
the overall upper-bounded perturbation $d$ is due to the constant
gravity term, but also due to the time-varying friction and
cogging force by-effects of the electro-magneto-mechanical
actuator.

The step response of the control \eqref{eq:CL_syst} (denoted
further as NLG owing to the nonlinear feedback gains) is compared
once with a standard proportional-derivative (PD) control and once
with the sub-optimal nonlinear damping control
\cite{ruderman2021a} (denoted further as SOND).

The benchmarking linear PD control
\[
u(t)_{\mathrm{PD}}=-\gamma x_{1}-\sigma x_{2},
\]
is designed first so that the unperturbed closed-loop control
system has two critically damped real poles, i.e., with an optimal
value of $\sigma$ that corresponds to the unity damping ratio.
This control is denoted by $\mathrm{PD}_{1}$. Then, its output
feedback gain $\gamma$ is enhanced while allowing also for the
conjugate complex pole pair with a corresponding transient
overshoot. That PD-control realization is denoted by
$\mathrm{PD}_{2}$.

The parameters of all four evaluated controllers are listed in
Table \ref{tab:1}.
\begin{table}[!h]
\global\long\def\arraystretch{1.5}%
\caption{\label{tab:1} Evaluated controllers}
\centering{}{ }%
\begin{tabular}{|p{2.5cm}||p{1cm}|p{1cm}|p{1cm}|p{1cm}|}
\hline { parameter \textbackslash\hspace{0.1mm} control }  & {
$\mathrm{PD}_{1}$ }  & { $\mathrm{PD}_{2}$ } & { $\mathrm{SOND}$ }
& { $\mathrm{NLG}$ }\tabularnewline \hline \hline { $x_{1}$-gain
$\gamma$ } & { $500$ }  & { $750$ }  & { $750$ } & { $25$
}\tabularnewline \hline { $x_{2}$-gain $\sigma$ }  & { $2$ }  & {
$4$ }  & { $-$ }  & { $-$ }\tabularnewline \hline
\end{tabular}
\end{table}

Note that for a fair comparison, the SOND control is assigned with
the same gaining factor $\gamma=750$ as the faster PD-control
$\mathrm{PD}_{2}$. The feedback gain of the NLG-control
$\gamma=25$ is intentionally set an order of magnitude lower than
the feedback gains of the PD and SOND controllers. This is for
demonstrating that the NLG-control can behave less aggressive
during the transient response and, at the same time, achieve a
much higher accuracy at steady-state. All experimentally measured
control responses, to the same reference step of
$y_{\mathrm{ref}}=8$ mm, are shown opposite each other in Figure
\ref{fig:4:2}.
\begin{figure}[!h]
\centering
\includegraphics[width=0.98\columnwidth]{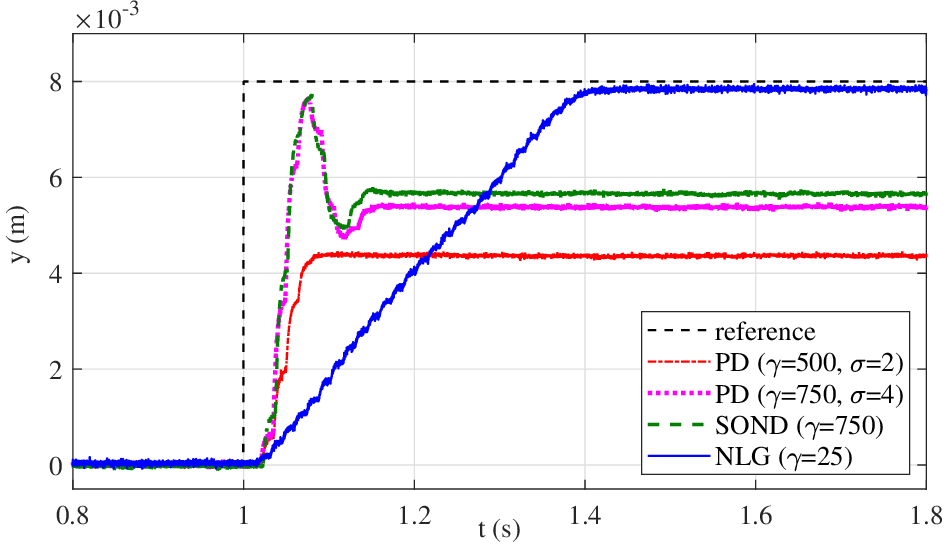}
\caption{\label{fig:4:2} Measured step response of PD, SOND, and
NLG controllers.}
\end{figure}

From the response of the NLG-control, one can recognize a nearly
uniform behavior of $x_{2}(t)$ during the whole transient phase,
cf. Figure \ref{fig:2:1} (a). Note that the corresponding
inclination of the $x_{1}(t)$ slope depends on the assigned
$\gamma$ parameter. It is also worth noting that a residual
non-zero, however minor, steady-state error of the NLG-control is
due to the sensor bias and noise, while the latter is additionally
impacting the used $x_{2}(t)$ signal, which is obtained via
differentiation and filtering of the measured $x_{1}(t)$ value.
The experimentally obtained signal $x_{2}(t)$ (when using the NLG
controller) is also shown in Figure \ref{fig:4:3} for the sake of
clarity.
\begin{figure}[!h]
\centering
\includegraphics[width=0.98\columnwidth]{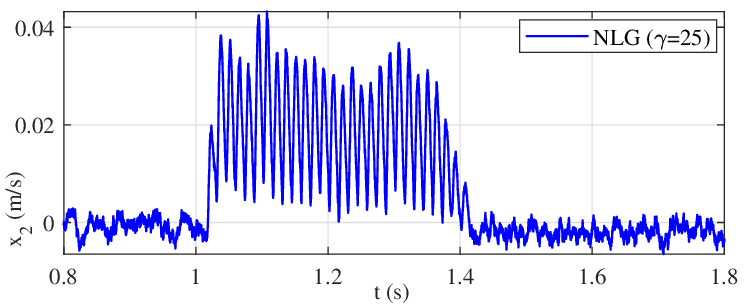}
\caption{\label{fig:4:3} The velocity state $x_{2}(t)$ of the NLG
control experiment.}
\end{figure}

\section{Summary}

\label{sec:5}

A novel robust nonlinear state feedback control for perturbed
second-order systems was introduced. The control law combines the
ideas of the state-dependent nonlinear damping introduced in
\cite{ruderman2021a} and the sliding mode control theory, which
equips the overall feedback law with an output-sign-dependent term
able to compensate for bounded perturbations. The resulted control
has only one tuning parameter, the overall output feedback gain,
whose selection should be based on the upper bound of the
perturbation magnitude, and this closed analytic dependence (even
though conservative) was shown in the paper. We provided a
detailed analysis of uniform boundedness and finite-time
convergence of the control system based on the proposed strict
Lyapunov function. The control signal appears to be continuous
during the transients, apart from the convergence instant, where
the state value reaches the origin (similarly to some high order
sliding mode control algorithms). Remarkable features of the
proposed control are that: (i) it is equivalent (from the
structural viewpoint) to a standard linear PD feedback control
and, at the same time, (ii) it can guarantee (without additional
integral action) for full compensation of the properly bounded
matched disturbances in the second-order systems without
overshoot, and arbitrary bounded in the steady state, while (iii)
providing a global finite-time convergence. The proposed control
was already evaluated experimentally on an
electro-magneto-mechanical actuator system with considerable
bounded disturbances. Analysis of its robustness abilities against
process and measurement noise and development of the methods for
its discretization can be considered as directions of future
research.

\section*{Acknowledgments}
\label{sec:6} This work was supported by AURORA mobility program (RCN grant number 340782).

\bibliographystyle{elsarticle-num}
\bibliography{references}

\vspace{10mm}

\appendix
\numberwithin{equation}{section}

Consider a second-order system
\begin{eqnarray}
\frac{\mathrm{d}x_{1}(\tau)}{\mathrm{d}\tau} & = & \bigl|x_{1}(\tau)\bigr|x_{2}(\tau),\label{eq:aux}\\
\frac{\mathrm{d}x_{2}(\tau)}{\mathrm{d}\tau} & = & -\gamma
x_{1}(\tau)-\bigl|x_{2}(\tau)\bigr|x_{2}(\tau)+\bigl|x_{1}(\tau)\bigr|d(\tau),\nonumber
\end{eqnarray}
in the time $\tau\geq0$, with the state
$x(\tau)=(x_{1}(\tau)\;x_{2}(\tau))^{\top}\in\mathbb{R}^{2}$ and
an essentially bounded input $d(\tau)\in\mathbb{R}$ with
$\|d\|_{\infty}\leq D$ for a given $D>0$, and with a gain
$\gamma>D$. The right hand-side of \eqref{eq:aux} is locally
Lipschitz continuous in $x(\tau)$ and $d(\tau)$, hence, the
solutions are uniquely defined for any initial condition
$x(0)\in\mathbb{R}^{2}$ and a measurable essentially bounded input
$d$ at least locally in time. Moreover, it is easy to check that
the origin is the equilibrium of the system and the submanifold
$\mathcal{X}=\{x\in\mathbb{R}^{2}:x_{1}=0\}$ is invariant
uniformly in $d$.
\begin{lem}
\label{lem:aux} The origin for \eqref{eq:aux} is uniformly
globally asymptotically stable if
\begin{equation}
\gamma>D^{1.5}+D+\frac{1}{2}.\label{eq:cond_gamma}
\end{equation}
\end{lem}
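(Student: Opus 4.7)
My plan is to mirror the argument used in the proof of Theorem~\ref{thm:GAS}, reusing the same strict Lyapunov function candidate
\[
V(x) = \gamma|x_{1}| + \varepsilon\,\mathrm{sign}(x_{1})\sqrt{|x_{1}|}\,x_{2} + \tfrac{1}{2}x_{2}^{2}
\]
with $\varepsilon\in(0,\sqrt{2\gamma})$, which is continuous on $\mathbb{R}^{2}$, positive definite and radially unbounded (because the underlying quadratic form in $(\mathrm{sign}(x_1)\sqrt{|x_1|},x_2)$ is positive definite), and continuously differentiable off $\mathcal{X}$. The only substantive change relative to the closed-loop analysis is that $V$ is now to be differentiated along \eqref{eq:aux} rather than along the closed-loop dynamics.

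Computing $\dot V$ on $\mathbb{R}^{2}\setminus\mathcal{X}$ with $\dot x_{1}=|x_{1}|x_{2}$ and $\dot x_{2}=-\gamma x_{1}-|x_{2}|x_{2}+|x_{1}|d$, the cross terms $\pm\gamma x_{1}x_{2}$ coming from $\partial_{x_{1}}V\cdot\dot x_{1}$ and $\partial_{x_{2}}V\cdot\dot x_{2}$ cancel, and what remains is exactly $|x_{1}|$ times the expression obtained for the closed-loop system in Theorem~\ref{thm:GAS}:
\[
\dot V = |x_{1}|x_{2}d - |x_{2}|^{3} - \varepsilon\bigl(\gamma-\mathrm{sign}(x_{1})d\bigr)|x_{1}|^{3/2} + \varepsilon\sqrt{|x_{1}|}\,x_{2}^{2}\bigl(\tfrac{1}{2}-\mathrm{sign}(x_{1})\mathrm{sign}(x_{2})\bigr).
\]
Applying Young's inequality (with $p=3$ to the perturbation term $D|x_{1}||x_{2}|$ and with $p=3/2$ to the mixed term $\sqrt{|x_{1}|}x_{2}^{2}$, exactly as in Theorem~\ref{thm:GAS}) and using $|d|\le D$ then yields
\[
\dot V \leq -\bigl(\tfrac{2}{3}-\varepsilon\bigr)|x_{2}|^{3} - \bigl[\varepsilon(\gamma-\tfrac{1}{2}-D)-\tfrac{2}{3}D^{3/2}\bigr]|x_{1}|^{3/2}.
\]
Condition \eqref{eq:cond_gamma} is precisely what makes the interval $\bigl(\tfrac{2D^{3/2}}{3(\gamma-\tfrac{1}{2}-D)},\,\tfrac{2}{3}\bigr)$ non-empty, so any $\varepsilon$ inside it makes both brackets strictly positive and gives a negative definite upper bound on $\dot V$, valid uniformly in $\|d\|_\infty\le D$.

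On the invariant set $\mathcal{X}$, the system reduces to $\dot x_{1}=0$, $\dot x_{2}=-|x_{2}|x_{2}$, which is independent of $d$ and satisfies $V=x_{2}^{2}/2$ with $\dot V=-|x_{2}|^{3}\le 0$, driving $x_{2}\to 0$. Combined with positive definiteness and radial unboundedness of $V$ on $\mathbb{R}^{2}$, the standard Lyapunov theorem then delivers uniform global asymptotic stability of the origin of \eqref{eq:aux}. The main obstacle I anticipate is the non-smoothness of $V$ on $\mathcal{X}$, where $\partial_{x_{1}}V$ carries a $|x_{1}|^{-1/2}$ singularity; this is benign, however, because the factor $|x_{1}|$ in $\dot x_{1}=|x_{1}|x_{2}$ cancels that singularity, so $\dot V$ extends continuously across $\mathcal{X}$ and no Dini/Clarke non-smooth analysis is required.
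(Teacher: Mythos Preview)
Your proposal is correct and follows essentially the same route as the paper: the same Lyapunov function $V(x)=\gamma|x_{1}|+\varepsilon\,\mathrm{sign}(x_{1})\sqrt{|x_{1}|}\,x_{2}+\tfrac{1}{2}x_{2}^{2}$, the same derivative computation along \eqref{eq:aux}, the same two Young's inequalities leading to the bound $-(\tfrac{2}{3}-\varepsilon)|x_{2}|^{3}-[\varepsilon(\gamma-\tfrac{1}{2}-D)-\tfrac{2}{3}D^{3/2}]|x_{1}|^{3/2}$, and the same separate treatment of the invariant set $\mathcal{X}$. The only additions are your explicit remark that the $|x_{1}|^{-1/2}$ singularity in $\partial_{x_{1}}V$ is cancelled by the factor $|x_{1}|$ in $\dot x_{1}$ (so $\dot V$ extends continuously to $\mathcal{X}$), and the paper's explicit check that $\tfrac{2}{3}<\sqrt{2\gamma}$ under \eqref{eq:cond_gamma}, which you should also note to ensure the admissible $\varepsilon$-interval lies inside $(0,\sqrt{2\gamma})$.
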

\begin{proof}
Let us consider the following Lyapunov function candidate
\begin{align*}
V(x) & =\frac{1}{2}\xi^{\top}(x)\left[\begin{array}{cc}
2\gamma & \varepsilon\\
\varepsilon & 1
\end{array}\right]\xi(x),\\
\xi(x) & =\left[\begin{array}{c}
\sqrt{|x_{1}|}\text{sign}(x_{1})\\
x_{2}
\end{array}\right],
\end{align*}
which is positive definite providing that the parameter
$\varepsilon\in(0,\sqrt{2\gamma})$. The function $V$ is
continuously differentiable for all
$x\in\mathbb{R}^{2}\setminus\mathcal{X}$. Note that
$V(x)=\frac{x_{2}^{2}}{2}$ for $x\in\mathcal{X}$ (it is an
invariant subspace for the system, and the trajectories of
\eqref{eq:aux} stay on it while being defined) and it is
straightforward to check that (we denote $V(\tau)=V(x(\tau))$)
\[
\frac{\mathrm{d}V(\tau)}{\mathrm{d}\tau}=-\bigl|x_{2}(\tau)\bigr|^{3}\leq0,
\]
implying convergence of $x_{2}(\tau)$ to zero. For
$x(0)\notin\mathcal{X}$ (note that
$\left(\mathbb{R}^{2}\setminus\mathcal{X}\right)\cup\{0\}$ is also
invariant for \eqref{eq:aux}), we obtain
\begin{gather*}
\frac{\mathrm{d}V(\tau)}{\mathrm{d}\tau}=-(\gamma-\text{sign}(x_{1}(\tau))d(\tau))\varepsilon\sqrt{|x_{1}(\tau)|}|x_{1}(\tau)|-\bigl|x_{2}(\tau)\bigr|^{3}\\
+\varepsilon\sqrt{|x_{1}(\tau)|}x_{2}^{2}(\tau)\left(\frac{1}{2}-\text{sign}(x_{1}(\tau))\text{sign}(x_{2}(\tau))\right)\\
+\bigl|x_{1}(\tau)\bigr|x_{2}(\tau)d(\tau)\\
\leq-(\gamma-D)\varepsilon\sqrt{|x_{1}(\tau)|}|x_{1}(\tau)|-\bigl|x_{2}(\tau)\bigr|^{3}\\
+\varepsilon\sqrt{|x_{1}(\tau)|}x_{2}^{2}(\tau)\left(\frac{1}{2}-\text{sign}(x_{1}(\tau))\text{sign}(x_{2}(\tau))\right)\\
+D|x_{1}(\tau)||x_{2}(\tau)|\\
\leq-[(\gamma-\frac{1}{2}-D)\varepsilon-\frac{2}{3}D^{1.5}]\sqrt{|x_{1}(\tau)|}|x_{1}(\tau)|\\
-(\frac{2}{3}-\varepsilon)\bigl|x_{2}(\tau)\bigr|^{3},
\end{gather*}
where on the last step the following inequalities have been
utilized (that are obtained from Young's inequality
\cite{young1912}):
\begin{gather*}
\sqrt{|x_{1}(\tau)|}x_{2}^{2}(\tau)\leq\frac{1}{3}\sqrt{|x_{1}(\tau)|}|x_{1}(\tau)|+\frac{2}{3}\bigl|x_{2}(\tau)\bigr|^{3},\\
D|x_{1}(\tau)||x_{2}(\tau)|\leq\frac{2}{3}D^{1.5}\sqrt{|x_{1}(\tau)|}|x_{1}(\tau)|+\frac{1}{3}|x_{2}(\tau)|^{3}.
\end{gather*}
Hence, for $\gamma>D+\frac{1}{2}$ and
\[
\frac{2}{3}\frac{D^{1.5}}{\gamma-\frac{1}{2}-D}<\varepsilon<\min\biggl\{\frac{2}{3},\sqrt{2\gamma}\biggr\}
\]
the desired property
\[
\frac{\mathrm{d}V(\tau)}{\mathrm{d}\tau}<0
\]
while $V(\tau)\ne0$ is obtained. The interval for the values of
$\varepsilon$ is not empty provided that the condition
\eqref{eq:cond_gamma} is verified. Indeed, this condition implies
that $\gamma>0.5$ for any $D>0$, hence,
$\min\{\frac{2}{3},\sqrt{2\gamma}\}=\frac{2}{3}$ and
$\frac{2}{3}\frac{D^{1.5}}{\gamma-\frac{1}{2}-D}<\frac{2}{3}$ is
equivalent to \eqref{eq:cond_gamma}. Combining the derived
estimates for $\frac{\mathrm{d}V(\tau)}{\mathrm{d}\tau}$ on the
sub-manifold of $\mathcal{X}$ and outside, the uniform global
asymptotic stability property of the origin for \eqref{eq:aux}
follows the conventional results \cite{Lin1996,khalil2002}.
\end{proof}

\end{document}